\title[Homogeneous spaces of Hilbert type]
{  
{ Homogeneous spaces of Hilbert type}
}
\author{Mikhail Borovoi}
\address{Raymond and Beverly Sackler School of Mathematical Sciences,
Tel Aviv University, 6997801 Tel Aviv, Israel}
\email{borovoi@post.tau.ac.il}
\thanks{Partially supported by the Hermann Minkowski Center for Geometry}
\subjclass[2010]{Primary: 14M17, 12E25, Secondary:  20G30, 12E30}
\keywords{Hilbertian field, variety of Hilbert type,  weak weak approximation, linear algebraic group, homogeneous space}
\DeclareSymbolFont{rsfs}{U}{rsfs}{m}{n}
\DeclareSymbolFontAlphabet{\mathcal}{rsfs}
\DeclareTextFontCommand{\textcyr}{\fontencoding{OT2}
    \fontfamily{wncyr}\fontseries{m}\fontshape{n}\selectfont}
\newcommand{\Sha}{{\textcyr{Sh}}}
\def\Sh{\Sha}
\def\Be{\textcyr{B}}
\theoremstyle{plain}
\newtheorem{theorem}{Theorem}[section]
\newtheorem{proposition}[theorem]{Proposition}
\newtheorem{lemma}[theorem]{Lemma}
\newtheorem{conditional-result}[theorem]{Conditional Result}
\newtheorem{theorem?}{Theorem(?)}[section]
\newtheorem{proposition?}[theorem]{Proposition(?)}
\newtheorem{lemma?}[theorem]{Lemma(?)}
\newtheorem{corollary?}[theorem]{Corollary(?)}
\newtheorem*{theorem*}{Theorem}
\newtheorem*{proposition*}{Proposition}
\newtheorem*{lemma*}{Lemma}
\newtheorem*{corollary*}{Corollary}
\newtheorem*{question*}{Question}
\newtheorem*{conjecture*}{Conjecture}
\newtheorem*{claim*}{Claim}
\theoremstyle{definition}
\newtheorem{notation}[theorem]{Notation}
\newtheorem*{definition*}{Definition}
\newtheorem*{example*}{Example}
\theoremstyle{remark}
\newtheorem*{remark*}{Remark}
\newtheorem*{Remarks*}{Remarks}
\newenvironment{remarks*}{\begin{Remarks*}\nopagebreak[4]
\rule{1em}{0ex}\par 
\begin{theoremlist}}%
{\end{theoremlist}\end{Remarks*}}
\newcommand{\isoto}{\overset{\sim}{\to}}
\newcommand{\into}{\hookrightarrow}
\newcommand{\onto}{\twoheadrightarrow}
\DeclareMathOperator{\coker}{coker}
\DeclareMathOperator{\Spec}{Spec}
\DeclareMathOperator{\Hom}{Hom}
\newcommand{\uu}{\mathrm{u}}
\def\red{\mathrm{red}}
\def\tor{{\mathrm{tor}}}
\def\sss{{\mathrm{ss}}}
\def\mult{{\mathrm{mult}}}
\def\ssu{{\mathrm{ssu}}}
\DeclareMathOperator{\Gal}{Gal}
\DeclareMathOperator{\Br}{Br}
\newcommand{\Bra}{\Br_\mathrm{a}}
\newcommand{\kbar}{{\bar{k}}}
\def\Br{{\rm Br\ }}
\def\kbar{{\overline{k}}}
\def\Gbar{{\overline{G}}}
\def\Hbar{{\overline{H}}}
\def\Xbar{{\overline{X}}}
\def\xbar{{\overline{x}}}
\def\Om{{\mathcal{V}_\infty(k)}}
\def\kinf{{k_\infty}}
\def\Z{{\mathbb{Z}}}
\def\Q{{\mathbb{Q}}}
\def\A{{\mathbb{A}}}
\def\sG{{\mathcal{G}}}
\def\ms{\medskip\par}
\def\Br{{\rm Br}}
\def\Bro{{\Br_1}}
\def\H{{\mathbb{H}}}
\def\sV{{\mathcal{V}}}
\def\Aut{{\rm Aut\ }}
\def\Ghat{{\widehat{G}}}
\def\Hhat{{\widehat{H}}}
\def\GG{{\mathbb{G}}}
\begin{document}

\maketitle

\begin{abstract}
Let $k$ be a global field.
Let $G$ be a connected linear algebraic $k$-group,
assumed reductive when $k$ is a function field.
It follows from a result of a paper  by Bary-Soroker, Fehm and Petersen
that when $H$ is a smooth connected $k$-subgroup of $G$, the quotient space $G/H$ is of Hilbert type.
We prove a similar result for certain non-connected $k$-subgroups $H$ of $G$.
In particular, we prove that if $G$ is a simply connected $k$-group over a number field $k$,
and $H$ is an abelian $k$-subgroup of $G$, not necessarily connected,
then $G/H$ is of Hilbert type.

\end{abstract}

\section{Introduction}\label{s:intro}

Let $k$ be a field.
By a $k$-variety we mean a  geometrically integral, separated scheme of finite type over $k$.
Let $X$ be a $k$-variety.
A subset $\Theta\subset X(k)$ is said to be of type $(C_1)$ if there is
a closed $k$-subvariety $Y\subset X$, \ $Y\neq X$, with $\Theta\subset Y(k)$.
A subset $\Theta\subset X(k)$ is said to be of type $(C_2)$
if there is a $k$-variety $X'$ with $\dim X'=\dim X$
and a dominant separable morphism $\pi\colon X'\to X$ of degree $\ge 2$
with $\Theta\subset \pi(X'(k))$.
A subset $\Theta$ of $X(k)$ is called {\em thin} if
it is contained in a finite union of subsets of types $(C_1)$ or $(C_2)$,
cf. Serre \cite[Def.~3.1.1]{Serre} and Bary-Soroker, Fehm and Petersen \cite[Section 2]{BFP}.
It follows from the definition that if we have  a finite family of thin subsets $\Theta_1,\dots,\Theta_r\subset X(k)$,
then their union $\Theta_1\cup\dots\cup \Theta_r$ is again a thin subset.

The notion of a variety {\em of Hilbert type} was introduced by Colliot-Th\'el\`ene and Sansuc \cite{CS}.
By definition, see Serre \cite[Def.~3.1.2]{Serre}, a $k$-variety $X$ is of Hilbert type if $X(k)$ is not thin.
A field $k$ is called Hilbertian if the affine  line $\A^1_k$ is of Hilbert type.
All the global fields (number fields and function fields) are Hilbertian,
see  \cite[Thm.~3.4.1]{Serre} and  Fried and Jarden \cite[Thm.~13.4.2]{FJ}.

It was proved  by Colliot-Th\'el\`ene and Sansuc in \cite[Cor.~7.15]{CS}
that  any connected reductive algebraic $k$-group over a Hilbertian field $k$ is of Hilbert type.
Moreover, we have the following more general result, cf. \cite{BFP}:

 \begin{proposition}\label{p:quot-connected}
 Let $k$ be a Hilbertian field.
 Let $G$ be a smooth connected linear algebraic $k$-group,
 assumed reductive when $k$ is not perfect.
 Let $H\subset G$ be a smooth connected $k$-subgroup.
 Then the quotient space $G/H$ is a $k$-variety of Hilbert type.
 \end{proposition}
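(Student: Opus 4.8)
The plan is to realize $G/H$ as the base of the quotient morphism $\pi\colon G\to G/H$ and to transfer the Hilbert type property from the total space $G$ down to the base. Since $H$ is smooth and connected, it is geometrically integral; consequently $\pi$ is a surjective (hence dominant) smooth morphism all of whose fibres are torsors under $H$, so in particular the generic fibre of $\pi$ over $k(G/H)$ is geometrically integral. The tool I would use is the following descent principle, essentially contained in \cite{BFP}: if $f\colon X\to Y$ is a dominant morphism of $k$-varieties whose generic fibre is geometrically integral and if $X$ is of Hilbert type, then $Y$ is of Hilbert type. Granting this principle together with the fact that $G$ itself is of Hilbert type, we obtain that $G/H$ is of Hilbert type.

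First I would check that $G$ itself is of Hilbert type. When $k$ is not perfect, $G$ is reductive by hypothesis, and this is \cite[Cor.~7.15]{CS}. When $k$ is perfect, write $R=R_u(G)$ for the unipotent radical, which is defined over $k$; then $R$ is a split unipotent group, isomorphic as a $k$-variety to an affine space $\A^N$, while $G^{\red}:=G/R$ is reductive and hence of Hilbert type by \cite[Cor.~7.15]{CS}. Since $R$ is split unipotent and $G^{\red}$ is affine, the $R$-torsor $G\to G^{\red}$ is trivial, so that $G\cong\A^N\times G^{\red}$ as $k$-varieties. As the product of an affine space with a variety of Hilbert type is again of Hilbert type (a special case of the fibration results of \cite{BFP}), $G$ is of Hilbert type.

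The heart of the argument is the descent principle, and the point to verify is that the preimage under $f$ of a thin subset $\Theta\subset Y(k)$ is thin in $X(k)$; the contrapositive then gives the claim. A subset of type $(C_1)$, contained in $Z(k)$ for a proper closed $Z\subsetneq Y$, pulls back into $f^{-1}(Z)(k)$, and $f^{-1}(Z)$ is a proper closed subvariety of $X$ because $f$ is dominant, so we again get a set of type $(C_1)$. A subset of type $(C_2)$, contained in $\pi'(X'(k))$ for a dominant separable morphism $\pi'\colon X'\to Y$ of degree $\ge 2$ with $\dim X'=\dim Y$, is handled by base change: I would form $X\times_Y X'$ and take the component $W$ dominating $X$. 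Because the generic fibre of $f$ is geometrically integral, the extension $k(X)/k(Y)$ is regular, hence linearly disjoint from the finite separable extension $k(X')/k(Y)$; thus $k(X)\otimes_{k(Y)}k(X')$ is a field, $W$ is geometrically integral with $\dim W=\dim X$, and the projection $W\to X$ is separable of degree $[k(X'):k(Y)]\ge 2$. A point of $f^{-1}(\Theta)\cap X(k)$ either lifts to $W(k)$, contributing to a set of type $(C_2)$, or lies in the image of one of the remaining lower-dimensional components of $X\times_Y X'$, contributing to a set of type $(C_1)$. Hence the preimage of a thin set is thin.

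The step I expect to be the main obstacle is precisely this $(C_2)$ case, i.e. controlling a degree-$\ge 2$ cover of $Y$ after pulling it back to $X$. The danger is that the pulled-back cover could acquire a generic section over $X$ --- equivalently, that $k(X)\otimes_{k(Y)}k(X')$ could fail to be a field and $W\to X$ could turn out birational --- in which case no genuine $(C_2)$ cover of $X$ is produced and the argument collapses. This is exactly where the geometric integrality of the generic fibre of $\pi\colon G\to G/H$, which comes from $H$ being smooth and connected, is indispensable: it forces $k(G/H)$ to be algebraically closed in $k(G)$ with the extension separable, ruling out any such splitting. Everything else --- existence of the quotient as a quasi-projective $k$-variety, smoothness and surjectivity of $\pi$, and geometric integrality of its fibres --- is standard structure theory of quotients of linear algebraic groups.
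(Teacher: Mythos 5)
Your argument is correct, and it is more detailed than the paper's, which offers no proof at all here: the author simply cites \cite[Cor.~4.5]{BFP} for the perfect case and asserts the reductive non-perfect case is similar. Your proof is essentially what that citation unwinds to, and its two pillars are tools the paper itself uses elsewhere. The descent principle you state and reprove (total space of Hilbert type plus dominant morphism with geometrically integral generic fibre implies the base is of Hilbert type) is exactly \cite[Prop.~7.13]{CS}, invoked verbatim in Step~1 of the proof of Theorem~\ref{t:quotient}, so you could have cited it instead of reproving it; your verification of the $(C_2)$ case — the regular extension $k(X)/k(Y)$ is linearly disjoint from the finite separable extension $k(X')/k(Y)$, so the pulled-back cover has a unique component dominating $X$, again separable of degree $\ge 2$, while the other components land in proper closed subsets — is the standard argument and is sound. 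The fact that $G$ itself is of Hilbert type is \cite[Cor.~7.15]{CS} in the reductive case, and your reduction $G\cong\A^N\times G^\red$ over a perfect field (unipotent radical defined over $k$, split, with trivial torsors over an affine base) correctly handles the general case, the product step following from \cite[Thm.~1.1]{BFP} since $\A^N_k$ is of Hilbert type over a Hilbertian field and $G^\red(k)$ is not thin. In short: correct and complete, reaching the same conclusion by the natural route the paper delegates to \cite{BFP}.
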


 \begin{proof}
 When $k$ is perfect, see \cite[Cor.~4.5]{BFP}.
 When $k$ is not perfect and $G$ is reductive, the proof is similar.
 \end{proof}

 In this note we consider the case when $k$ is a {\em global field} and $G$ is a smooth connected linear algebraic $k$-group.
 We prove that  $G/H$ is of Hilbert type for certain   \emph{non-connected and non-smooth}  subgroups $H\subset G$.
 In particular, we prove the following theorem:

 \begin{theorem}\label{t:abelian}
Let $G$ be a simply connected $k$-group over a number field $k$,
and let $H\subset G$ be an abelian $k$-subgroup, not necessarily connected.
Then $G/H$ is of Hilbert type.
\end{theorem}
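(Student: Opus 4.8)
The plan is to combine the connected case (Proposition~\ref{p:quot-connected}) with arithmetic (local--global) methods to handle the disconnected part of $H$. Since $k$ is a number field we are in characteristic $0$, so $H$ is automatically smooth, and $X := G/H$ is a smooth, geometrically integral $k$-variety with $X(k) \ne \emptyset$. Let me first note why Proposition~\ref{p:quot-connected} does not suffice on its own. Writing $H^0$ for the identity component and $F := H/H^0$ for the (finite abelian) component group, the quotient $Y := G/H^0$ is of Hilbert type by Proposition~\ref{p:quot-connected}, and $\pi\colon Y \to X$ is a torsor under the finite group $F$. One cannot simply descend the Hilbert property along $\pi$, because $\pi(Y(k))$ is itself a thin set of type $(C_2)$ --- this is exactly the sort of subset type $(C_2)$ is designed to capture --- and it omits those $x \in X(k)$ whose fibre $\pi^{-1}(x)$ is a nontrivial $F$-torsor. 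The non-connectedness must therefore be treated arithmetically.

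The core step is to prove that $X = G/H$ satisfies \emph{weak weak approximation} over $k$, and then to invoke the principle that a smooth, geometrically integral variety over a number field with weak weak approximation is of Hilbert type. I would justify that principle as follows. If $X(k)$ were thin, it would be covered by finitely many subsets of types $(C_1)$ and $(C_2)$. A $(C_1)$-piece lies in a proper closed subvariety $Z \subsetneq X$, and $Z(k_v)$ is nowhere dense in $X(k_v)$ at every place. For a $(C_2)$-cover $\rho\colon X' \to X$ of degree $\ge 2$, a Chebotarev argument (using that a transitive action of a finite group on a set of size $\ge 2$ always contains a fixed-point-free element) produces a positive-density set of places $v$ at which $\rho(X'(k_v))$ omits a fixed positive proportion of $X(k_v)$. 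Choosing finitely many such good places outside the exceptional set for weak weak approximation, and approximating there, one produces a rational point of $X$ lying outside every piece of the cover, a contradiction.

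It remains to establish weak weak approximation for $X = G/H$, and this is where simple connectedness of $G$ and commutativity of $H$ enter. The tools I would use are: $G$ simply connected satisfies weak approximation; $H^1(k_v, G) = 1$ at all non-archimedean $v$ (Kneser), together with the Hasse principle for $H^1(k, G)$; and the analysis of the map $q\colon G \to X$, where the obstruction to lifting a local point $P_v \in X(k_v)$ to $G(k_v)$ is a class in $\ker[\,H^1(k_v, H) \to H^1(k_v, G)\,]$. For the connected subgroup $H^0$ this obstruction is controlled by the abelianized Galois cohomology of $H^0$ and yields weak weak approximation for $Y = G/H^0$; the remaining contribution comes through $F = \pi_0(H)$. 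Since $H$ is abelian, $F$ is a finite abelian group with Galois action, and the interplay of the local obstructions in $H^1(k_v, F)$ with the connected part can be analysed by Poitou--Tate duality and Chebotarev density, the aim being to show that the family of local lifting obstructions is matched by global classes densely enough that rational points of $X$ approximate arbitrary local points away from a controlled finite set of places.

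The main obstacle is precisely this control of the disconnected quotient $F = \pi_0(H)$. For connected $H$ the required weak approximation statement is essentially known, but the nontrivial finite \'etale torsor $Y \to X$ forces one to show that the local obstructions in $H^1(k_v, F)$, together with their coupling to the connected part, are realized by global cohomology classes densely enough to permit approximation. The commutativity of $H$ is what makes this tractable --- guaranteeing that $F$ is abelian and that the relevant obstruction lies in an abelian group to which duality applies --- and this is exactly why the hypothesis ``$H$ abelian'' is imposed.
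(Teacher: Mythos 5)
Your overall strategy --- prove weak weak approximation for $X=G/H$ and then invoke the Ekedahl--Colliot-Th\'el\`ene theorem --- is exactly the route taken in the paper's alternative proof (Section \ref{s:weak}), and your preliminary observations (why naive descent along $G/H^0\to G/H$ fails because $\pi(Y(k))$ is thin of type $(C_2)$, and the sketch of Ekedahl's argument) are correct. But the substantive content of the theorem is precisely the verification of WWA for a non-connected stabilizer, and there your proposal stops at a statement of intent: the ``interplay of the local obstructions in $H^1(k_v,F)$ with the connected part'' is to be ``analysed by Poitou--Tate duality and Chebotarev density'' so that local obstructions are ``matched by global classes densely enough.'' This describes what must be proved rather than proving it, and it is not clear that a direct torsor-by-torsor analysis of $\ker[H^1(k_v,H)\to H^1(k_v,G)]$ can be pushed through: for non-connected $H$ this nonabelian cohomology set does not decompose cleanly into contributions of $H^0$ and $F=\pi_0(H)$, and the coupling you allude to is exactly the difficulty. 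In particular, nothing in your sketch identifies the finite exceptional set of places $S_0$ outside of which weak approximation holds, which is the crux of any WWA proof.

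The paper closes this gap with three concrete inputs: (a) the theorem of \cite{Borovoi-Crelle} that for a homogeneous space $G/H$ with $G^\sss$ simply connected and $H_1=\ker[H\to H^\mult]$ connected and geometrically character-free (both automatic here, since $G$ is simply connected and $H$ is abelian in characteristic $0$), the \emph{only} obstruction to weak approximation in $S$ is the Brauer--Manin obstruction attached to $\Be_{S,\emptyset}(G/H)$; (b) the identification $\Bra(G/H)\iso\H^2(k,\Ghat\to\Hhat)$ of \cite{BvH2}; and (c) Lemma \ref{l:Sha}, a d\'evissage through permutation modules showing $\Sh^2_{S,\emptyset}(k,\Ghat\to\Hhat)=0$ whenever $S$ avoids the finitely many places whose decomposition groups in the splitting field of $\Ghat\times\Hhat$ are non-cyclic; this is what produces $S_0$. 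None of these steps appear in your proposal, so as written the argument has a genuine gap at its central point. Note also that the paper's \emph{primary} proof of Theorem \ref{t:quotient} is entirely different and avoids Brauer groups: after enlarging $G$ by a quasi-trivial torus so that $H^\mult$ embeds into $G^\tor$, one fibres $X$ over the torus $T:=G^\tor/H^\mult$, shows the fibres over the non-thin set $T(k)\cap T(\kinf)^0$ have rational points by a local-global principle and are of Hilbert type by Proposition \ref{p:quot-connected}, and concludes by the fibration theorem of \cite{BFP}; you may find that route easier to complete than the WWA argument.
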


 \begin{proof}
 This theorem is a special case of Theorem \ref{t:quotient} below.
 \end{proof}

 The plan of the rest of the note is as follows.
 In the end of the Introduction we give some notation.
  In Section \ref{s:number} we consider homogeneous spaces over a number field.
 The main result of this section, Theorem \ref{t:quotient}, implies Theorem \ref{t:abelian}, and its
 proof uses \cite[Thm.~1.1]{BFP}. In Section \ref{s:function} we consider homogeneous spaces over a global function field.
 The main result of this section is Theorem \ref{t:quotient-pos},
 the proof of which is similar to and  easier than that of Theorem \ref{t:quotient},
 and uses \cite[Cor.~1.2]{BFP}.
 In Section \ref{s:weak} we give an alternative proof of Theorem \ref{t:quotient}.
 We show that a homogeneous space $G/H$ as in Theorem  \ref{t:quotient}
 has the WWA (weak weak approximation) property, cf. \cite[Def.~3.5.6]{Serre},
 and therefore, by a theorem of Ekedahl and Colliot-Th\'el\`ene
 (see  \cite[Thm.~3.5.7]{Serre}) $G/H$ is of Hilbert type.

\begin{notation}
Let $k$ be a field.
By $\kbar$ we denote a fixed algebraic closure of $k$.
If $X$ is a $k$-variety, we write $\Xbar:=X\times_k \kbar$.
If $H$ is a linear algebraic group (not necessarily connected or smooth),
we write $H^\mult$ for the largest quotient group of $H$ which is a group of multiplicative type.

Let $G$ be a smooth connected linear algebraic $k$-group, and $H\subset G$ a $k$-subgroup, not necessarily connected or smooth.
We write $G/H$ for the quotient variety. For the existence  of $G/H$
see \cite[Thm.~12.2.1]{Springer} in the case when $H$ is smooth,
and \cite[Exp.~VI$_A$, Thm.~3.2(i)]{SGA3} in the general case.
Since $G$ is smooth, so is $G/H$, see \cite[after Example A.1.12, bottom of p.~395]{CGP}.

If $k$ is a number field, we denote by $\sV(k)$ the set of all places of $k$, and by $\Om$ the set of all archimedean places.
If $v\in \sV(k)$, we write $k_v$ for the completion of $k$ at $v$.
We set $\kinf:=\prod_{v\in\Om} k_v$.
If $X$ is a $k$-variety, then $X(k)$ embeds into $X(\kinf)=\prod_{v\in\Om} X(k_v)$.
\end{notation}

\section{Number fields}\label{s:number}

\begin{notation}
Let $k$ be a field of characteristic 0.
Let $H$ be a linear $k$-group.
We assume that the kernel $H_1:=\ker[H\to H^\mult]$ is  connected and geometrically character-free.
In this case $H_1$ is an extension of a connected  semisimple group by a unipotent group.
Note that  in characteristic 0, any {\em connected} linear $k$-group $H$ and any {\em abelian} linear $k$-group $H$ satisfy this assumption.

Let $G$ be a connected linear algebraic $k$-group.
We use the following notation:

$G^\uu$ is the unipotent radical of $G$;

$G^\red=G/G^\uu$, it is reductive;

$G^\sss=[G^\red, G^\red]$, it is semisimple;

$G^\tor=G^\red/G^\sss$, it is a torus;

$G^\ssu=\ker[G\to G^\tor]$, it is an extension of the semisimple group $G^\sss$ by the unipotent group $G^\uu$.

Note that $G^\tor$ is the largest quotient torus of $G$ and that $G^\mult=G^\tor$.

\end{notation}

\begin{theorem}\label{t:quotient}
Let $k$ be a number field.
Let $G$ be a connected linear algebraic $k$-group, and let $H\subset G$ be a $k$-subgroup, not necessarily connected.
We assume that $G^\sss$ is simply connected and that $H_1:=\ker[H\to H^\mult]$
is  connected and geometrically character-free.
Then the variety $X:=G/H$ is of Hilbert type.
\end{theorem}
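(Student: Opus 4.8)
The plan is to split off the connected part of $H$ and then to descend through the finite cover furnished by its component group. Since $\charact k=0$, the groups $G$ and $H$ are smooth, so $H^\circ$ is a smooth connected $k$-subgroup of $G$ and $F:=H/H^\circ=\pi_0(H)$ is a finite $k$-group. As $H_1\subseteq H^\circ$ and $H^\circ/H_1\cong (H^{\mult})^\circ$ is a torus, we get $F\cong\pi_0(H^{\mult})$, a finite group of multiplicative type. Because $k$ is Hilbertian and $H^\circ$ is smooth and connected, Proposition~\ref{p:quot-connected} applies and shows that $Y:=G/H^\circ$ is of Hilbert type. The quotient map $\pi\colon Y=G/H^\circ\to G/H=X$ is finite \'etale, in fact an $F$-torsor, and the whole problem reduces to deducing that the base $X$ is of Hilbert type from the fact that the source $Y$ is.

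This last descent is the crux. For a dominant morphism with geometrically integral generic fiber one has the formal implication ``source of Hilbert type $\Rightarrow$ base of Hilbert type'', because the preimage of a thin set is then again thin; this is exactly how the connected case in Proposition~\ref{p:quot-connected} is handled. Here, however, the generic fiber of $\pi$ is the disconnected group $F$, and the implication genuinely fails: if $F\neq 1$ then $\Theta:=\pi(Y(k))$ is a thin subset of $X(k)$ of type $(C_2)$ whose preimage is all of $Y(k)$. Thus the disconnectedness of $H$, concentrated in $F$, is the real obstacle, and to clear it one must produce many $k$-points of $X$ lying outside any prescribed thin set --- an arithmetic input, not a purely geometric one.

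To supply that input I would bring in the hypotheses ``$G^\sss$ simply connected'' and ``$k$ a number field'' through \cite[Thm.~1.1]{BFP}. The point is that a connected $k$-group $G$ with $G^\sss$ simply connected has weak approximation outside a finite set of places $S$ (Platonov--Kneser for the simply connected part, triviality of $H^1(k_v,G^\sss)$ at the nonarchimedean $v$, and weak approximation up to a finite set for the torus quotient), and this abundance of rational points descends to the homogeneous space $X$. Concretely, for $v\notin S$ every $k_v$-point of $X$ lifts to $Y$, so the torsor $\pi$ splits at such $v$; one then runs over the twists ${}^cY$ of $\pi$ by the classes $c\in H^1(k,F)$, each of which is again a quotient $G/H'$ by a smooth connected $k$-subgroup and hence of Hilbert type by Proposition~\ref{p:quot-connected}, and feeds this family together with the local splitting into the descent mechanism of \cite[Thm.~1.1]{BFP} to conclude that $X(k)$ is not thin. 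Theorem~\ref{t:abelian} is then the special case $G=G^\sss$ simply connected with $H$ abelian, for which $H_1$ is automatically connected and geometrically character-free, so the hypotheses of the theorem hold.

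The step I expect to be the main obstacle is precisely this descent through the finite \'etale cover $\pi$: controlling the family of twists ${}^cY$ indexed by the (infinite) group $H^1(k,F)$ by means of the weak approximation furnished by the simply connected group $G^\sss$, and verifying that $\pi$ together with its twists meets the hypotheses of \cite[Thm.~1.1]{BFP}. Everything else --- smoothness in characteristic $0$, the identification $F\cong\pi_0(H^{\mult})$, and the Hilbert type of the connected quotients --- is routine given Proposition~\ref{p:quot-connected}.
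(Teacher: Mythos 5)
Your reduction to the connected case goes through the finite \'etale cover $\pi\colon Y:=G/H^\circ\to X:=G/H$, and you correctly identify that the implication ``$Y$ of Hilbert type $\Rightarrow$ $X$ of Hilbert type'' fails for such a cover, since $\pi(Y(k))$ is itself thin of type $(C_2)$. But the repair you propose does not close this gap. First, the twists ${}^cY$ for $c\in H^1(k,F)$ are $k$-forms of $G/H^\circ$ as homogeneous spaces of $G$; they need not have a $k$-point, so they need not be of the form $G/H'$ with $H'$ connected, and Proposition \ref{p:quot-connected} does not apply to them. (Establishing that a given twist has a $k$-point is itself a local-global problem of the same difficulty as the one you are trying to solve, and here the stabilizer $H^\circ$ is connected but generally not character-free, so \cite[Prop.~3.4(i)]{Borovoi-Crelle} is not available.) Second, even granting that every twist is of Hilbert type, the decomposition $X(k)=\bigcup_c\pi_c({}^cY(k))$ expresses $X(k)$ as a union of thin sets over the (typically infinite) set $H^1(k,F)$, and no conclusion about non-thinness of $X(k)$ follows; \cite[Thm.~1.1]{BFP} is a fibration theorem (positive-dimensional fibers of Hilbert type over a non-thin set of points of the base imply the total space is of Hilbert type) and provides no ``descent mechanism'' through a finite \'etale cover. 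You flag this descent as the main obstacle, and it is: as written it is not a proof, and making it one would require a genuinely new argument about avoiding thin sets via covers.

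The paper sidesteps the finite cover entirely by fibering in the opposite direction: after first arranging (by replacing $G$ with $G\times_k Q$ for a quasi-trivial torus $Q$ containing $H^\mult$, and using \cite[Prop.~7.13]{CS}) that $H^\mult$ embeds into $G^\tor$, one gets a $G$-equivariant map $\mu\colon X\to T:=G^\tor/H^\mult$ onto a torus. The component group of $H$ is absorbed into $H^\mult$ and hence into $T$, where non-thinness of the relevant set of points is handled by the group structure (Lemma \ref{l:non-thin}); the fibers of $\mu$ are homogeneous spaces of the simply connected group $G^\ssu$ with \emph{connected}, character-free stabilizers conjugate to $H_1$. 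For $t\in T(k)\cap T(\kinf)^0$ the fiber has archimedean local points, hence a $k$-point by \cite[Prop.~3.4(i)]{Borovoi-Crelle}, hence is of Hilbert type by Proposition \ref{p:quot-connected}, and \cite[Thm.~1.1]{BFP} then applies to $\mu$. I suggest you reorganize your argument along these lines: the ingredients you list (simple connectedness of $G^\sss$, Hilbertianity of tori, Proposition \ref{p:quot-connected}) are the right ones, but they must be fed into a fibration over $T$, not a descent through $G/H^\circ$.
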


We need two lemmas.

\begin{lemma}\label{l:Arno}
Let $F$ be a linear algebraic group over a field $k$, and let $\Lambda$ be a subgroup of finite index of $F(k)$.
If $F$ is of Hilbert type, then the set $\Lambda\subset F(k)$ is not thin.
\end{lemma}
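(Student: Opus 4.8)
The plan is to argue by contraposition: assuming $\Lambda$ is thin, I will show that the whole set $F(k)$ is thin, contradicting the hypothesis that $F$ is of Hilbert type. Since $\Lambda$ has finite index $n:=[F(k):\Lambda]$ in $F(k)$, I first choose representatives $g_1,\dots,g_n\in F(k)$ of the left cosets and write $F(k)$ as the finite union $F(k)=\bigcup_{i=1}^n g_i\Lambda$.

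The crucial observation is that each $g_i$ is a $k$-point of $F$, so left translation $L_{g_i}\colon F\to F$, $x\mapsto g_i x$, is an isomorphism of $k$-varieties, with inverse $L_{g_i^{-1}}$. I then verify that thinness is preserved under any $k$-isomorphism $\phi\colon F\isoto F$. Indeed, if $\Theta\subset Y(k)$ for a proper closed $k$-subvariety $Y\subsetneq F$ (type $(C_1)$), then $\phi(\Theta)\subset \phi(Y)(k)$ with $\phi(Y)$ again a proper closed $k$-subvariety; and if $\Theta\subset\pi(X'(k))$ for a dominant separable $k$-morphism $\pi\colon X'\to F$ of degree $\ge 2$ (type $(C_2)$), then $\phi(\Theta)\subset(\phi\circ\pi)(X'(k))$, where $\phi\circ\pi$ is again dominant and separable of the same degree. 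Since a thin set is by definition contained in a finite union of sets of types $(C_1)$ and $(C_2)$, it follows that $\phi$ carries thin subsets of $F(k)$ to thin subsets.

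Applying this with $\phi=L_{g_i}$, I conclude that each coset $g_i\Lambda=L_{g_i}(\Lambda)$ is thin whenever $\Lambda$ is thin. Hence $F(k)=\bigcup_{i=1}^n g_i\Lambda$ is a finite union of thin sets, and is therefore thin by the remark (recorded in the Introduction) that a finite union of thin sets is thin. This contradicts the assumption that $F$ is of Hilbert type, i.e.\ that $F(k)$ is not thin, and so $\Lambda$ cannot be thin.

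The only substantive point in this argument is the stability of thinness under the translation automorphisms $L_{g_i}$; everything else is bookkeeping with cosets together with the closure of thin sets under finite unions. I expect the step worth emphasizing to be that $L_{g_i}$ is genuinely a $k$-morphism rather than merely a $\kbar$-morphism: this relies precisely on $g_i$ being $k$-rational, which is guaranteed because the coset representatives are taken inside $F(k)\supset\Lambda$. Note also that the group structure on $F$ is used only through these translations, so the argument applies verbatim to any homogeneous space on which $F$ acts by $k$-automorphisms.
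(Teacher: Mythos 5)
Your proof is correct and follows essentially the same route as the paper: decompose $F(k)$ into finitely many cosets of $\Lambda$, observe each translate is thin if $\Lambda$ is, and conclude by the finite-union property. The only difference is that you spell out the verification that thinness is preserved by the $k$-automorphisms $L_{g_i}$ (checking both types $(C_1)$ and $(C_2)$), a step the paper's proof takes for granted.
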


\begin{proof}
Since $\Lambda$ is of finite index in $F(k)$,
we have $F(k)=f_1\Lambda\cup\dots \cup f_m\Lambda$
for some finite set $f_1,\dots,f_m\subset F(k)$.
Suppose for the sake of contradiction that $\Lambda$ is thin,
then all the sets $f_i\Lambda$ are thin $(i=1,\dots, m)$, hence the finite union $F(k)=f_1\Lambda\cup\dots \cup f_m\Lambda$ is thin,
which contradicts to the assumption  that $F$ is of Hilbert type.
\end{proof}

\begin{lemma}\label{l:non-thin}
Let $T$ be a $k$-torus over a number field $k$.
Let $T(\kinf)^0$ denote the identity component of $T(\kinf)$.
Then the set $T(k)\cap T(\kinf)^0$ is not thin.
\end{lemma}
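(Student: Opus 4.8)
The plan is to deduce the statement from Lemma \ref{l:Arno}. For this I would verify two things: first, that the torus $T$ is itself of Hilbert type, and second, that the subgroup $\Lambda:=T(k)\cap T(\kinf)^0$ has finite index in $T(k)$. Granting both, Lemma \ref{l:Arno} applied to $F=T$ and to $\Lambda$ immediately yields that $\Lambda$ is not thin, which is exactly the assertion.

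For the first point, I would note that a $k$-torus is a connected reductive linear algebraic group, and that a number field is Hilbertian. Hence, by the theorem of Colliot-Th\'el\`ene and Sansuc \cite[Cor.~7.15]{CS} (equivalently, by Proposition \ref{p:quot-connected} applied with the trivial subgroup $H=\{1\}$), the $k$-variety $T$ is of Hilbert type, i.e.\ $T(k)$ is not thin.

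For the second point, I would use that $T(\kinf)=\prod_{v\in\Om}T(k_v)$ is a real Lie group with only finitely many connected components. Indeed, each factor $T(k_v)$ with $k_v=\R$ or $\C$ is the group of real points of a real algebraic group, hence has finitely many components (the complex factors being in fact connected), and $\Om$ is finite. Consequently the identity component $T(\kinf)^0$ is an open subgroup of finite index, and the component group $\pi_0(T(\kinf))=T(\kinf)/T(\kinf)^0$ is finite. The subgroup $\Lambda$ is precisely the kernel of the composite homomorphism $T(k)\hookrightarrow T(\kinf)\twoheadrightarrow \pi_0(T(\kinf))$, so its index in $T(k)$ equals the order of the image of $T(k)$ in $\pi_0(T(\kinf))$, which divides $|\pi_0(T(\kinf))|$ and is therefore finite.

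Combining the two points through Lemma \ref{l:Arno} finishes the argument. I do not expect a serious obstacle here: the proof rests only on the well-known facts that tori over Hilbertian fields are of Hilbert type and that the archimedean component group is finite, after which the finite-index claim is purely formal. The one step deserving explicit care is recording why $T(\kinf)^0$ has finite index, which relies on the finiteness of the number of connected components of the real points of a real algebraic group.
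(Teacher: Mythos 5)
Your proof is correct and follows essentially the same route as the paper: establish that $T$ is of Hilbert type over the Hilbertian field $k$, note that $\pi_0(T(\kinf))$ is finite so that $T(k)\cap T(\kinf)^0$ has finite index in $T(k)$, and conclude via Lemma \ref{l:Arno}. The only difference is that you spell out the finiteness of the archimedean component group and the finite-index step in more detail than the paper does.
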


\begin{proof}
Since  the number field $k$ is Hilbertian, the torus $T$ is a variety of Hilbert type, cf. \cite[Cor.~7.14]{CS}.
Consider the group of connected components $\pi_0(T(\kinf)):=T(\kinf)/T(\kinf)^0$, it is finite.
It follows that the subgroup $T(k)\cap T(\kinf)^0$ is of finite index in $T(k)$.
Since $T$ is of Hilbert type, by Lemma \ref{l:Arno}  $T(k)\cap T(\kinf)^0$ is not thin.
\end{proof}

\begin{proof}[Proof of Theorem \ref{t:quotient}]
We prove the theorem in two steps, using the method of \cite{Borovoi-Crelle}.
\ms
{\em Step 1.}
The inclusion $H\into G$ induces a homomorphism $H^\mult\to G^\tor$, which need not be injective.
Choose an embedding $j\colon H^\mult\into Q$ into a quasi-trivial $k$-torus $Q$,
and denote by  $m\colon H\to H^\mult$  the canonical epimorphism.
Set $G_Y:=G\times_k Q$, then we have a diagonal embedding
$$
H\into G_Y,\quad h\mapsto (\,h,j(m(h))\,)\in G\times_k Q=G_Y.
$$
Let $H_Y$ denote the image of $H$ in $G_Y$, then $(H_Y)^\mult$ embeds into $(G_Y)^\tor$.
Set $Y=G_Y/H_Y$.
We have a map
$$
\pi\colon Y\to X,\ (g,q)H_Y\mapsto gH, \text{where } (g,q)\in G\times_k Q=G_Y.
$$
The subgroup $Q\subset G_Y$ acts on $Y$ on the left, and $Y$ is a left torsor over $X$ under $Q$.
It follows that all the fibers of $\pi$ are geometrically integral, in particular, the generic fiber of $\pi$ is geometrically integral.
Therefore, by \cite[Prop.~7.13]{CS}, in order to prove that $X$ is of Hilbert type, it suffices to show that $Y$ is of Hilbert type.
\medskip

{\em Step 2.}
From now on we assume that $H^\mult$ embeds into $G^\tor$, and we regard $H^\mult$ as a subgroup of $G^\tor$.
The canonical homomorphism $G\to G^\tor$ takes $H$ to $H^\mult\subset G^\tor$
and induces a $G$-equivariant map $\mu\colon X=G/H\to T:=G^\tor/H^\mult$.
We see that $T$ is a $k$-torus.
We have a commutative diagram
\[
\xymatrix{
G\ar[r]\ar[d]_t\ar[rd]^\phi    &X\ar[d]^\mu\\
G^\tor \ar[r]                 &T
 }
 \]
where the horizontal arrows are the canonical maps $G\to X:=G/H$ and $G^\tor\to T:=G^\tor/H^\mult$.
 Note that the diagonal arrow $\phi\colon G\to T$ is a homomorphism of $k$-groups.
We see from the diagram that $\mu(X(\kinf))\supset \phi(G(\kinf))$.
On the other hand, the homomorphism $\phi$ is surjective, hence it is a smooth morphism of varieties,
and therefore the subgroup $\phi(G(\kinf))\subset T(\kinf)$ is open,
hence it contains $T(\kinf)^0$.
Thus $\mu(X(\kinf))$ contains $T(\kinf)^0$.

 Let $t\in T(k)\cap T(\kinf)^0$ be a $k$-point.
We show that the fiber $X_t:=\mu^{-1}(t)$ is of Hilbert type.
First we prove that $X_t$ has a $k$-point.
Clearly $X_t$ is a homogeneous space of $G^\ssu:=\ker[G\to G^\tor]$.
From the assumption that $G^\sss$ is simply connected we deduce that $G^\ssu$ is simply connected.
Let $x_0\in X(k)$ denote the image of the unit element of $G$ in $X(k)=G(k)/H(k)$,
then the stabilizer of $x_0$ in $G^\ssu$ is $H\cap G^\ssu=H_1$, because $H^\mult$ embeds into $G^\tor$.
Let $\xbar\in X_t(\kbar)$
and let $\Hbar_{1,\xbar}$ denote the stabilizer of $\xbar$ in $\overline{G^\ssu}$, then
 $\Hbar_{1,\xbar}$ is conjugate to $\overline{H_1}$ in $\Gbar$, hence it is connected and character-free.
Since $t\in T(k)\cap T(\kinf)^0\subset\mu(X(\kinf))$, we see that the homogeneous space $X_t$ has a $k_v$-point for every $v\in \Om$.
By a local-global principle for such homogeneous spaces, see  \cite[Prop.~3.4(i)]{Borovoi-Crelle},
the homogeneous space $X_t$ has a $k$-point $x$.
Now let $H_{1,x}$ denote the stabilizer of $x$ in $G^\ssu$, then $H_{1,x}$ is connected.
Since $X_t$ is isomorphic to $G^\ssu/H_{1,x}$, where  $H_{1,x}$ is a connected $k$-subgroup of a connected  linear $k$-group $G^\ssu$,
by Proposition \ref{p:quot-connected} $X_t$ is of Hilbert type.

By Lemma \ref{l:non-thin} the subset $T(k)\cap T(\kinf)^0\subset T(k)$ is not thin, and we have shown that
for any $t\in T(k)\cap T(\kinf)^0$ the fiber $X_t$  of $X$ over $t$ is of Hilbert type.
By \cite[Theorem 1.1]{BFP}, the variety $X$ is of Hilbert type.
\end{proof}

\section{Function fields}\label{s:function}

Let $k$ be a field of characteristic $p>0$.
Let $G$ be a  smooth connected {\em reductive} linear algebraic group over $k$.
We set $G^\sss=[G, G]$ and $G^\tor=G/G^\sss$.

Recall that a global function field is the function field of a smooth, projective, connected curve over a finite field.

\begin{theorem}\label{t:quotient-pos}
Let $k$ be a global function field.
Let $G$ be a smooth, connected, reductive $k$-group, and let $H\subset G$ be a  $k$-subgroup, not necessarily connected or smooth.
We assume that $G^\sss$ is simply connected and that $H_1:=\ker[H\to H^\mult]$
is smooth, connected and semisimple.
Then the quotient variety $X:=G/H$  is of Hilbert type.
\end{theorem}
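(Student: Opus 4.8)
The plan is to run the same two-step argument as in the proof of Theorem \ref{t:quotient}, the only change being that the analysis at the archimedean places is now vacuous and is replaced by an analysis at a finite set of finite places, with the concluding fibration step invoking \cite[Cor.~1.2]{BFP} in place of \cite[Thm.~1.1]{BFP}. \emph{Step 1} is carried out word for word as before: embed $H^\mult$ into a quasi-trivial $k$-torus $Q$, form $G_Y=G\times_k Q$ with the diagonal embedding $h\mapsto(h,j(m(h)))$ of $H$, and put $Y=G_Y/H_Y$. Then $Y$ is a torsor over $X$ under $Q$, so all fibres of $Y\to X$ are geometrically integral and, by \cite[Prop.~7.13]{CS}, it suffices to prove that $Y$ is of Hilbert type. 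Here $G_Y$ is again reductive with $(G_Y)^\sss=G^\sss$ simply connected, the kernel $\ker[H_Y\to(H_Y)^\mult]\cong H_1$ is still smooth, connected and semisimple, and now $(H_Y)^\mult$ embeds into $(G_Y)^\tor$. Renaming $Y$ as $X$, I may thus assume that $H^\mult\subset G^\tor$.

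\emph{Step 2.} Exactly as over a number field, I form the $k$-torus $T=G^\tor/H^\mult$, the $G$-equivariant map $\mu\colon X\to T$ and the surjective smooth homomorphism $\phi\colon G\to T$ sitting in the same commutative square; here $\ker[G\to G^\tor]=G^\sss$ is simply connected, and the fibre $X_t=\mu^{-1}(t)$ over any $t\in T(k)$ is a homogeneous space of $G^\sss$ whose geometric stabilisers are conjugate to $\overline{H_1}$, hence smooth, connected and semisimple. Since $k$ is Hilbertian, $T$ is of Hilbert type by \cite[Cor.~7.14]{CS}, so by \cite[Cor.~1.2]{BFP} it will be enough to produce a \emph{non-thin} subset $\Theta\subset T(k)$ such that $X_t$ is of Hilbert type for every $t\in\Theta$. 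Because $\phi$ is smooth and surjective, each $\phi(G(k_v))$ is open in $T(k_v)$, and since $k_v$ is a non-archimedean local field the cohomology set $H^1(k_v,\ker\phi)$ is finite, so $\phi(G(k_v))$ is of finite index in $T(k_v)$. Choosing a finite set $S$ of places outside which $G$, $H$ and $\mu$ have good reduction, I set
\[
\Theta=\{\,t\in T(k):\ t\in\phi(G(k_v))\ \text{for all}\ v\in S\,\},
\]
a subgroup of finite index in $T(k)$, hence non-thin by Lemma \ref{l:Arno}.

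It remains to see that $X_t$ is of Hilbert type for $t\in\Theta$. For such $t$ the fibre $X_t$ has a $k_v$-point at every place $v$: at $v\in S$ because $t\in\phi(G(k_v))\subset\mu(X(k_v))$, and at $v\notin S$ by good reduction (Lang's theorem over the finite residue field, together with the simple connectedness of $G^\sss$ and the smoothness of $\mu$). By the function-field analogue of the local--global principle \cite[Prop.~3.4(i)]{Borovoi-Crelle}, which carries \emph{no} archimedean hypothesis since $k$ has no archimedean places, the homogeneous space $X_t$ of the simply connected group $G^\sss$ with connected semisimple geometric stabilisers then has a $k$-point $x$. Its stabiliser $H_{1,x}$ in $G^\sss$ is a smooth connected semisimple $k$-subgroup and $X_t\cong G^\sss/H_{1,x}$; as $G^\sss$ is reductive, Proposition \ref{p:quot-connected} (in its reductive form, valid over the imperfect field $k$) shows that $X_t$ is of Hilbert type. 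With the non-thin $\Theta$ in hand, \cite[Cor.~1.2]{BFP} yields that $X$ is of Hilbert type.

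The main obstacle is the input I labelled ``the function-field analogue of \cite[Prop.~3.4(i)]{Borovoi-Crelle}'': one must establish, over a global function field and hence without any help from real places, the Hasse principle for a homogeneous space of a simply connected group with connected semisimple stabilisers, and one must confirm that good reduction outside $S$ really forces local solubility at all $v\notin S$. It is exactly at this point that the strengthened hypothesis ``$H_1$ smooth connected \emph{semisimple}'' (rather than merely character-free, as suffices in characteristic $0$) is used, and it is in this same sense that the argument is easier than over a number field: there are no archimedean places left to obstruct the principle.
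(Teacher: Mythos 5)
There is a genuine gap: the entire argument rests on what you call ``the function-field analogue of \cite[Prop.~3.4(i)]{Borovoi-Crelle}'', which you explicitly leave unproved and flag as ``the main obstacle''. That statement is not a routine transposition of the number-field case (whose proof in \cite{Borovoi-Crelle} uses the arithmetic of number fields); it is precisely the nontrivial arithmetic input of the theorem. The paper supplies it as Proposition \ref{t:B-Douai}: the pairs $(P,\alpha)$ with $P$ a $G$-torsor and $\alpha\colon P\to X_t$ equivariant form a gerbe bound by a quasi-split form $H_0$ of $\Hbar_1$; by Douai's theorem every gerbe over a global function field bound by a smooth connected semisimple group is neutral, and by Harder's theorem every torsor under a simply connected semisimple group over a global function field is trivial, whence $X_t(k)\neq\emptyset$. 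Without this (or an equivalent substitute) your proof does not close.

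Moreover, the correct statement is stronger than the Hasse principle you invoke: \emph{no local solubility hypotheses are needed at all} --- every fiber $X_t$, for every $t\in T(k)$, has a $k$-point. This makes your whole local apparatus (the finite set $S$ of good reduction, the subgroup $\Theta$, Lang's theorem at the places outside $S$) superfluous, and it is also where your argument is shakiest: the claim that $H^1(k_v,\ker\phi)$ is finite is doubtful in characteristic $p$, since $\ker\phi$ is an extension of the possibly non-smooth group $H^\mult$ by $G^\sss$, and flat $H^1$ of groups such as $\mu_p$ over a local field of characteristic $p$ is infinite; so the finite-index property of $\Theta$ is not justified. Finally, a citation mismatch: if you only control the fibers over a non-thin subset $\Theta\subset T(k)$ you should invoke \cite[Thm.~1.1]{BFP} (as in the number-field case), whereas \cite[Cor.~1.2]{BFP} is what the paper uses because there \emph{all} fibers over $k$-points of the Hilbert-type base $T$ are shown to be of Hilbert type.
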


\begin{proof}
\ \ {\em Step 1.}
As in the proof of Theorem \ref{t:quotient},  we reduce the theorem to the case when $H^\mult$ embeds into $G^\tor$.
\medskip

{\em Step 2.}
From now on we assume that $H^\mult$ embeds into $G^\tor$, and we regard $H^\mult$ as a subgroup of $G^\tor$.
The canonical homomorphism $G\to G^\tor$ takes $H$ to $H^\mult\subset G^\tor$
and induces a $G$-equivariant map $\mu\colon X\to T:=G^\tor/H^\mult$.
We see that $T$ is a $k$-torus, hence $T$ is of Hilbert type, cf. \cite[Cor.~7.14]{CS}.

Let $t\in T(k)$ be a $k$-point.
We show that the fiber $X_t:=\mu^{-1}(t)$ is of Hilbert type.
Clearly $X_t$ is a homogeneous space of $G^\sss:=\ker[G\to G^\tor]$.
Let $\xbar\in X_t(\kbar)$.
Since  $H^\mult$ embeds into $G^\tor$, the stabilizer $\Hbar_{1,\xbar}$ of $\xbar$ in $\overline{G^\sss}$
is conjugate in $\Gbar$ to $\Hbar\cap\overline{G^\sss}=\overline{H_1}$, hence it is smooth, connected and semisimple.
Now by Proposition \ref{t:B-Douai} below,
the homogeneous space $X_t$ has a $k$-point $x$.
Let $H_{1,x}$ denote the stabilizer of $x$ in $G^\sss$, then $H_{1,x}$ is smooth and connected.
Since $X_t$ is isomorphic to $G^\sss/H_{1,x}$ where  $H_{1,x}$ is a smooth connected $k$-subgroup
of a smooth connected reductive $k$-group $G^\sss$,
by Proposition \ref{p:quot-connected} $X_t$ is of Hilbert type.

Since  $T$ is of Hilbert type and for any $t\in T(k)$ the fiber $X_t$ of $X$ over $t$ is of Hilbert type,
by \cite[Cor. 1.2]{BFP} the variety $X$ is of Hilbert type.
\end{proof}

In the proof of Theorem \ref{t:quotient-pos} we used the following  result for function fields, which is
similar to \cite[Prop.~3.4(i)]{Borovoi-Crelle} for number fields.

\begin{proposition}\label{t:B-Douai}
Let $k$ be a global function field. Let $G$  be a smooth, connected,
semisimple, simply connected $k$-group. Let $X$ be a right homogeneous
space of $G$. Let $\Hbar$ be the stabilizer of a geometric point $\xbar\in X(\kbar)$,
where $\kbar$ is a fixed algebraic closure of $k$.
Assume that $\Hbar$ is smooth, connected and semisimple.
Then $X$ has a $k$-point.
\end{proposition}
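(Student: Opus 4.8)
The plan is to follow the method of \cite{Borovoi-Crelle}, transporting its cohomological obstruction analysis from number fields to the global function field $k$. The decisive simplification is that $k$ has no archimedean places and satisfies $\mathrm{cd}_\ell(k)=2$ for $\ell\neq p$ and $\mathrm{cd}_p(k)\le 1$, so that the archimedean correction term which forces the local hypotheses in the number field statement \cite[Prop.~3.4(i)]{Borovoi-Crelle} simply disappears. First I would attach to the right homogeneous space $X$ its Springer class: the geometric point $\xbar$ with smooth connected stabilizer $\Hbar$ determines a $k$-lien (kernel) $L$ with underlying $\kbar$-group $\Hbar$, together with a class $\eta(X)\in H^2(k,L)$ in Springer's second nonabelian cohomology, and $X$ has a $k$-point if and only if $\eta(X)$ is neutral.

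Next I would abelianize this obstruction, using Borovoi's abelianization of $H^2$, valid over $k$ by Douai's results on the second nonabelian cohomology over function fields (this is the source of the name of the proposition). Because $G$ is semisimple and simply connected, its abelianized cohomology $H^i_\ab(k,G)$ vanishes in every degree, since the defining complex $[T\scon\to T]$ is acyclic for $G=G\scon$; hence the obstruction to neutrality of $\eta(X)$ is captured entirely by the abelianization $\ab^2(\eta(X))\in H^2_\ab(k,\Hbar)$, and $\eta(X)$ is neutral precisely when this class vanishes. Since $\Hbar$ is semisimple, $H^2_\ab(k,\Hbar)=\H^2(k,[T_{\Hbar}\scon\to T_{\Hbar}])$ is computed from the central isogeny $\Hbar\scon\to\Hbar$, whose kernel $\bar Z:=\ker[\Hbar\scon\to\Hbar]$ is a finite $k$-group of multiplicative type; the complex is quasi-isomorphic to $\bar Z$ placed in degree $-1$, whence $H^2_\ab(k,\Hbar)\cong H^3(k,\bar Z)$.

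Finally I would invoke the cohomological dimension of $k$. For the prime-to-$p$ part of $\bar Z$, which is étale, one has $H^3(k,\bar Z)=0$ because $\mathrm{cd}_\ell(k)=2<3$; the $p$-primary part may be non-smooth (for instance $\bar Z=\mu_p$ when $\Hbar=\mathrm{PGL}_p$), so there one passes to the flat topology and uses $\mathrm{cd}_p(k)\le 1$ together with Douai's function-field computations to conclude that the relevant flat cohomology also vanishes. Thus $\ab^2(\eta(X))=0$, the class $\eta(X)$ is neutral, and $X(k)\neq\emptyset$; simple connectedness of $G$ is used once more through Harder's theorem $H^1(k,G)=0$, which ensures that once the stabilizer is defined over $k$ no residual $H^1$-obstruction survives. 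The main obstacle is exactly this $p$-primary part in characteristic $p$: there $\bar Z$ need not be smooth, Galois cohomology must be replaced by fppf cohomology, and one cannot argue purely by the cohomological dimension of the Galois group but must appeal to the finer duality results of Douai. Verifying the vanishing of this $p$-part, and confirming that Borovoi's abelianization theorem for $H^2$ survives with non-smooth center in positive characteristic, is the technical heart of the proof.
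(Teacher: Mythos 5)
Your skeleton coincides with the paper's: both encode the obstruction to finding a $k$-rational pair (a $G$-torsor $P$ together with a $G$-equivariant map $P\to X$) as a degree-$2$ nonabelian cohomology class of the lien of $\Hbar$ --- the paper phrases this as the gerbe $\sG_X$ locally bound by $\Hbar$, you as the Springer class $\eta(X)$ --- and both finish by quoting Harder's theorem for the simply connected group $G$ to turn the resulting torsor into a $k$-point. (Your phrase ``$X$ has a $k$-point if and only if $\eta(X)$ is neutral'' is imprecise --- neutrality only produces the dominating torsor --- but you repair this yourself with Harder at the end.) The divergence is in how neutrality is established. The paper quotes two results from Douai's thesis: the band of $\sG_X$ is representable by a quasi-split $k$-form $H_0$ of $\Hbar$, and every gerbe over a global function field bound by a smooth connected semisimple group is neutral. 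You instead propose to \emph{prove} that second input, by abelianizing $H^2$, reducing to $H^3(k,\bar Z)$ for the fundamental group $\bar Z=\ker[\Hbar\scon\to\Hbar]$, and invoking $\mathrm{cd}_\ell(k)=2$ for $\ell\neq p$ together with flat-cohomology vanishing for the $p$-part. That is a more self-contained route in principle, and the peripheral computations are sound.

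The genuine gap is at the decisive step: the criterion ``$\eta(X)$ is neutral if and only if $\ab^2(\eta(X))=0$'' is not a formality. Over number fields it is a theorem of Borovoi with archimedean hypotheses; over a global function field its validity for semisimple bands --- including those whose $\bar Z$ is infinitesimal, where Galois cohomology must be replaced by fppf cohomology and no purely profinite cohomological-dimension argument applies --- is essentially the content of the very theorem of Douai that the paper cites. You identify this as ``the technical heart'' but do not carry it out, so as written the proposal assumes the key input in the guise of ``Borovoi's abelianization theorem survives in positive characteristic.'' (The remark that $H^i_\ab(k,G)$ vanishes because $[T\scon\to T]$ is acyclic for $G$ simply connected is also not what makes the obstruction land in $H^2_\ab(k,\Hbar)$; that reduction is part of the same unproven neutrality criterion.) To close the argument you must either supply a proof of the neutrality criterion over function fields in the non-smooth-center case, or simply cite Douai's neutrality theorem for gerbes bound by smooth connected semisimple groups --- at which point your proof and the paper's become the same argument in two dialects.
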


\begin{proof}
For a  commutative \'etale algebra $L/K$, we consider the category of pairs $(P_{(L)},\alpha_{(L)})$,
 where $P_{(L)}$ is a   right torsor over $L$ of $G_L:=G\times_k L$ and
$\alpha_{(L)}\colon P_{(L)}\to X_L$ is a $G_L$-equivariant $L$-morphism.
This category is clearly a groupoid (maybe empty).
Over the algebraic closure $\kbar$ of $k$ such a pair  clearly exists and any two such pairs are isomorphic.
We obtain a fibered category $\sG_X$ over $\Spec k$, which is
a gerbe  locally bound by $\Hbar$, see Giraud \cite[Ch.~IV, 5.1.2]{Giraud}.
Since $\Hbar$ is semisimple, the band of $\sG$
can be represented by a quasi-split $k$-form $H_0$ of $\Hbar$, see \cite[Prop.~V.3.2]{Douai-thesis},
and the gerbe $\sG$ is bound by $H_0$.
By definition, the gerbe  $\sG_X$ is neutral (trivial) if a pair $(P,\alpha)$ as above exists over $k$,
see Giraud \cite[Ch.~IV, Prop.~5.1.4(ii)]{Giraud}.
For details see also \cite[\S\,1]{Douai95}.
Since $k$ is a global function field, and since $H_0$ is smooth, connected and semisimple,
by Douai's theorem  \cite[Cor.~VIII.1.4]{Douai-thesis}
any gerbe  over $k$ which is bound by $H_0$ is neutral.
Thus our gerbe $\sG_X$ is neutral, i.e., there exists  pair $(P,\alpha)$ as above, defined over $k$.
By Harder's theorem \cite[Satz A]{Harder} any torsor of a simply connected semisimple $k$-group
over a global function field $k$ has a $k$-point.
Thus $P$ has a $k$-point $p$. We obtain a $k$-point $x$ of $X$ by taking  $x=\alpha(p)\in X(k)$.
This completes the proofs of Proposition \ref{t:B-Douai} and Theorem \ref{t:quotient-pos}.
\end{proof}

\section{Weak weak approximation}\label{s:weak}
In this section we give an alternative proof of Theorem \ref{t:quotient}, using the WWA (weak weak approximation) property.
In this section $k$ is always a number field and $S\subset \sV(k)$ is a finite subset.

One says that a $k$-variety $X$ has the WWA (weak weak approximation) property,
if there exists a finite subset $S_0\subset \sV(k)$ with the following property:
for any finite subset $S\subset \sV(k)$ such that $S\cap S_0=\emptyset$,
the variety $X$ has weak approximation in $S$
(i.e. $X(k)$ is dense in $\prod_{v\in S} X(k_v)$).
By a theorem of T.~Ekedahl and J.-L.~Colliot-Th\'el\`ene, see  \cite[Thm.~3.5.7]{Serre},
any variety with the WWA property is of Hilbert type.
We shall prove that $G/H$ as in Theorem \ref{t:quotient} has $WWA$, thus it is of Hilbert type.

\begin{notation}
Let $X$ be a smooth $k$-variety.
We write $\Br(X)$ for the cohomological Brauer group of $X$,
i.e. $\Br(X)=H^2_{\text{\'et}}(X,\GG_m)$.
We set $\Bro(X)=\ker[\Br(X)\to\Br(\Xbar)]$ where $\Xbar=X\times_k \kbar$.
We define the \emph{algebraic Brauer group} $\Bra(X)$
by $\Bra(X)=\coker[\Br(k)\to\Bro(X)]$.
For a finite subset $S\subset\sV(k)$
we set
$$
\Be_S(X)=\ker\left[\Bra(X)\to\prod_{v\notin S} \Bra(X_{k_v})\right].
$$
Set   $\Be_{S,\emptyset}(X)=\Be_S(X)/\Be_\emptyset(X)$.

Let  $B$ be a discrete $\Gal(\kbar/k)$-module
which is finitely generated as an abelian group (we say just ``a finitely generated Galois module'').
We write
$$
\Sh^i_S(k,B)=\ker\left[ H^i(k, B)\to \prod_{v\notin S} H^i(k_v,B)\right],
$$
where $i=1,2,\dots$.
We set $\Sh^i_{S,\emptyset}(k,B):=\Sh^i_S(k,B)/\Sh^i_\emptyset(k,B)$.

Let $A\to B$ be a morphism of finitely generated $\Gal(\kbar/k)$-modules.
We write $\H^i(k,A\to B)$  $(i=1,2,\dots)$ for  $i$-th Galois hypercohomology of the complex $A\to B$,
where $A$ is in degree 0 and $B$ is in degree 1.
We define  $\Sh^i_S(k,A\to B)$ and   $\Sh^i_{S,\emptyset}(k,A\to B)$ as above.
\end{notation}

\begin{lemma}\label{l:Sha}
Let $k$ be a number field, and let $A\to B$ be a homomorphism of finitely generated $\Gal(\kbar/k)$-modules,
where $A$ is free as an abelian group.
Let $K/k$ be the finite Galois extension in $\kbar$ corresponding to $\ker\left[\Gal(\kbar/k)\to \Aut A\times\Aut B\right]$.
Let $S\subset\sV(k)$ be a finite subset formed by places with cyclic decomposition groups in $\Gal(K/k)$.
Then $\Sh^2_{S,\emptyset}(k, A\to B)=0$.
\end{lemma}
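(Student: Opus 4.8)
The plan is to reduce the hypercohomology statement for the complex $A\to B$ to a single Chebotarev-type vanishing for finite modules split by $K$, and then to prove the latter by restricting to $K$, applying the Chebotarev density theorem, and finally exploiting the cyclicity of the decomposition groups of the places in $S$.

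First I would record two reductions. From the short exact sequence of complexes $0\to(0\to B)\to(A\to B)\to(A\to 0)\to 0$ (with $B$ in degree $1$ and $A$ in degree $0$) one obtains the localization-compatible hypercohomology exact sequence $H^1(k,B)\to\H^2(k,A\to B)\to H^2(k,A)\to H^2(k,B)$, which is meant to reduce the vanishing of $\Sh^2_{S,\emptyset}(k,A\to B)$ to that of $\Sh^1_{S,\emptyset}(k,B)$ and of $\Sh^2_{S,\emptyset}(k,A)$. For the latter, the sequence $0\to A\to A\otimes\Q\to A\otimes\Q/\Z\to 0$ together with the vanishing of the Galois cohomology of the uniquely divisible module $A\otimes\Q$ gives a localization-compatible isomorphism $H^2(k,A)\cong H^1(k,A\otimes\Q/\Z)$, hence $\Sh^2_{S,\emptyset}(k,A)\cong\Sh^1_{S,\emptyset}(k,A\otimes\Q/\Z)$; since $A\otimes\Q/\Z=\varinjlim A/nA$ is a union of finite modules split by $K$ and $\Sh^i_{S,\emptyset}$ commutes with filtered colimits, everything is reduced to the following statement. \emph{Key Lemma: if $N$ is a finitely generated $\Gal(\kbar/k)$-module split by $K$, then $\Sh^1_{S,\emptyset}(k,N)=0$.}

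To prove the Key Lemma, take $\eta\in\Sh^1_S(k,N)$. I would first show that $\mathrm{res}_{K/k}\eta=0$ in $H^1(K,N)$: when $N$ is torsion-free this is automatic, since then $H^1(K,N)=\Hom(\Gal(\kbar/K),N)=0$; in general it follows from the finite case, where for every place $w$ of $K$ above one of the cofinitely many $v\notin S$ the localization of $\eta_K$ vanishes, so the continuous homomorphism $\eta_K$ is trivial on the Frobenius at a set of places of density $1$, whence $\eta_K=0$ by Chebotarev. Thus $\eta$ is inflated from a class $\bar\eta\in H^1(\Gal(K/k),N)$. For each $v\notin S$ the localization $\eta_v$ is the inflation of $\mathrm{res}_{D_v}\bar\eta$, where $D_v$ is the decomposition group; since inflation is injective in degree $1$ and $\eta_v=0$, we get $\mathrm{res}_{D_v}\bar\eta=0$. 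By Chebotarev every cyclic subgroup of $\Gal(K/k)$ is conjugate to $D_v=\langle\mathrm{Frob}_v\rangle$ for infinitely many unramified $v$, in particular for some $v\notin S$, so $\mathrm{res}_C\bar\eta=0$ for every cyclic subgroup $C$. Finally, the hypothesis says that $D_v$ is cyclic for every $v\in S$, so $\mathrm{res}_{D_v}\bar\eta=0$ and hence $\eta_v=0$ there as well; therefore $\eta\in\Sh^1_\emptyset(k,N)$.

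The main obstacle is not this Chebotarev core but the first reduction: $\Sh^i_{S,\emptyset}$ is merely a subquotient of hypercohomology and is not an exact functor, because the groups $\Sh^i_\emptyset$ need not vanish, so one cannot simply push the long exact sequence through it. The delicate step is therefore to establish the exactness one needs---e.g. $\Sh^1_{S,\emptyset}(k,B)\to\Sh^2_{S,\emptyset}(k,A\to B)\to\Sh^2_{S,\emptyset}(k,A)$---by a diagram chase through the injections $\Sh^i_{S,\emptyset}(k,C)\hookrightarrow\bigoplus_{v\in S}\H^i(k_v,C)$, using that all maps commute with localization and lifting classes locally at the finitely many places of $S$. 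An alternative is to invoke a Poitou--Tate duality for complexes, identifying $\Sh^2_{S,\emptyset}(k,A\to B)$ with $\Sh^1_{S,\emptyset}$ of the dual complex in one stroke; I would avoid this route, however, since dualizing forces the splitting field to contain roots of unity, over which the decomposition groups of the places of $S$ need no longer be cyclic, so that the hypothesis of the lemma could be lost.
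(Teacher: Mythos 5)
Your ``Key Lemma'' and its Chebotarev proof are essentially correct --- this is exactly the result \cite[Cor.~3.3]{Borovoi-Ramanujan} that the paper invokes as its final step. The genuine gap is in the reduction to it, and you have located the problem yourself without closing it. Splitting the complex as $0\to(0\to B)\to(A\to B)\to(A\to 0)\to 0$ yields $H^1(k,B)\to\H^2(k,A\to B)\to H^2(k,A)\to H^2(k,B)$, and the passage to $\Sh^2_{S,\emptyset}$ cannot be repaired by the diagram chase you sketch. Concretely: given $\eta\in\Sh^2_S(k,A\to B)$, the hypothesis $\Sh^2_{S,\emptyset}(k,A)=0$ only tells you that the image $\bar\eta$ lies in $\Sh^2_\emptyset(k,A)$, which need not vanish; so for $v\in S$ you only learn that $\bar\eta_v=0$, hence that $\eta_v$ is the image of \emph{some local} class $\beta_v\in H^1(k_v,B)$. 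These $\beta_v$ have no reason to come from a global class in $\Sh^1_S(k,B)$, so the vanishing of $\Sh^1_{S,\emptyset}(k,B)$ gives you nothing, and the injectivity of $\Sh^2_{S,\emptyset}(k,-)\into\prod_{v\in S}\H^2(k_v,-)$ does not help because the obstruction is the failure of the local classes to globalize, not the problem of distinguishing global classes by their localizations. (A secondary issue: $\Sh^1_S$ does not commute with the filtered colimit $A\otimes\Q/\Z=\varinjlim A/nA$, since membership in $\Sh^1_S$ imposes infinitely many local conditions, each of which may only be achieved at a different stage of the colimit.)

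The device that makes the d\'evissage legitimate --- and which is missing from your argument --- is to first replace $B$ by a permutation $\Gal(K/k)$-module. The paper chooses a permutation module $P$ with $P\onto B$ and sets $L=A\times_B P$, giving a quasi-isomorphism $(L\to P)\to(A\to B)$; in the resulting sequence $H^1(k,P)\to\H^2(k,L\to P)\to H^2(k,L)\to H^2(k,P)$ the boundary terms vanish on the nose, both globally and locally, since $H^1(F,P)=0$ for every field $F$ (Shapiro plus $H^1(F,\Z)=0$) and $\Sh^2_S(k,P)=0$ by Sansuc. Hence $\Sh^2_{S,\emptyset}(k,A\to B)\isoto\Sh^2_{S,\emptyset}(k,L)$ with no exactness issue, and a second permutation resolution $0\to L\to P''\to C\to 0$ (obtained by dualizing a permutation surjection onto $L^\vee$) shifts this to $\Sh^1_{S,\emptyset}(k,C)$ for a finitely generated $C$ split by $K$, where your Chebotarev argument applies. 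Without this permutation-module replacement the reduction step of your proposal does not go through.
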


\begin{proof}
Set $\Gamma=\Gal(K/k)$.
Choose an epimorphism of $\Gamma$-modules $P\onto B$, where $P$ is a permutation $\Gamma$-module.
Set $L=A\times_B P$, then we have a quasi-isomorphism $(L\to P)\to (A\to B)$,
and therefore isomorphisms
$$\H^2(k,L\to P)\isoto \H^2(k, A\to B) \quad \text{and} \quad  \Sh^2_{S,\emptyset}(k,L\to P)\isoto \Sh^2_{S,\emptyset}(k, A\to B).$$

We have a short exact sequence of complexes
$$
0\to (0\to P)\to (L\to P)\to (L\to 0)\to 0,
$$
which induces a hypercohomology exact sequence
$$
0=H^1(k,P)\to \H^2(k,L\to P)\to H^2(k,L)\to H^2(k,P),
$$
which in turn induces isomorphisms
$$\Sh^2_{S}(k,L\to P)\isoto \Sh^2_{S}(k, L)\quad \text{and}\quad\Sh^2_{S,\emptyset}(k,L\to P)\isoto \Sh^2_{S,\emptyset}(k, L)$$
(because $\Sh^2_S(k,P)=0$ by \cite[(1.9.1)]{Sansuc}).

Since $A$ and $P$ are finitely generated free abelian groups,
$L$ is a finitely generated free abelian group as well.
 We set $L^\vee:=\Hom(L,\Z)$.
 Choose an epimorphism $P'\onto L^\vee$, where $P'$ is a permutation $\Gamma$-module.
We obtain an embedding $L\into P''$, where $P'':=(P')^\vee$ is a permutation $\Gamma$-module.
Set $C=P''/L$.
The short exact sequence of $\Gamma$-modules
$$
0\to L\to P''\to C\to 0
$$
induces a cohomology exact sequence
$$
0=H^1(k,P'')\to H^1(k,C)\to H^2(k,L)\to H^2(k,P''),
$$
which in turn induces isomorphisms
$$\Sh^1_{S}(k,C)\isoto \Sh^2_{S}(k, L)\quad \text{and}\quad\Sh^1_{S,\emptyset}(k,C)\isoto \Sh^2_{S,\emptyset}(k, L)$$
(because $\Sh^2_S(k,P'')=0$ by \cite[(1.9.1)]{Sansuc}).

Since $\Gal(\kbar/K)$ acts trivially on $C$, and $S$ consists of  places with cyclic decomposition groups in $\Gal(K/k)$,
  by \cite[Cor.~3.3]{Borovoi-Ramanujan} $\Sh^1_{S,\emptyset}(k,C)=0$.
Thus $\Sh^2_{S,\emptyset}(k, A\to B)=0$, which proves the lemma.
\end{proof}

\begin{proof}[Alternative proof of Theorem \ref{t:quotient}]
Let $G$ and $H$ be as in Theorem \ref{t:quotient}.
We have a homomorphism of $\Gal(\kbar/k)$-modules $\Ghat\to \Hhat$,
where $\Ghat:=\Hom(\Gbar,\GG_{m,\kbar})$ and $\Hhat:=\Hom(\Hbar,\GG_{m,\kbar})$ are the corresponding geometric character groups.
Let $K/k$ be the finite  Galois extension in $\kbar$ corresponding to the kernel
$$
\ker\left[\Gal(\kbar/k)\to\Aut \Ghat \times \Aut\Hhat\right].
$$
Let $S_0\subset\sV(k)$ denote the (finite) set of all places of $k$ with {\em non-cyclic}
decomposition groups in $\Gal(K/k)$.
Let $S\subset\sV(k)$ be a finite set such that $S\cap S_0=\emptyset$.
We shall prove that $G/H$ has weak approximation in $S$.

By \cite[Thm.~7.2]{BvH2} there is a canonical isomorphism $\Bra(G/H)\isoto\H^2(k,\Ghat\to\Hhat)$,
which induces a canonical isomorphism
$\Be_{S,\emptyset}(G/H)\isoto\Sh^2_{S,\emptyset}(k,\Ghat\to\Hhat)$.
By Lemma \ref{l:Sha} $\Sh^2_{S,\emptyset}(k,\Ghat\to\Hhat)=0$, hence $\Be_{S,\emptyset}(G/H)=0$.

The Brauer-Manin obstruction of \cite{Borovoi-Crelle} to weak approximation in $S$ for a $k$-variety $X$
is a certain map
$$
m_S\colon \prod_{v\in S}X(k_v)\to \Be_{S,\emptyset}(G/H)^D,
$$
where $\Be_{S,\emptyset}(G/H)^D:=\Hom(\Be_{S,\emptyset}(G/H),\Q/\Z)$.
Since $\Be_{S,\emptyset}(G/H)=0$, we obtain that $\Be_{S,\emptyset}(G/H)^D=0$, hence $m_S$ is identically zero.
We see that the group $G^\sss$ is simply connected, $H_1:=\ker[H\to H^\mult]$ is connected and geometrically character-free,
and the obstruction $m_S$ is identically zero, hence
by \cite[Thm.~2.3]{Borovoi-Crelle}  the variety $G/H$ has  weak approximation in $S$.

We have proved that $G/H$ has WWA, hence it is of Hilbert type.
\end{proof}

\noindent
{\bf Acknowledgements.} The author is grateful
to Lior Bary-Soroker, Jean-Claude Douai, Arno Fehm and Cristian D. Gonz\'alez-Avil\'es  for very helpful discussions.

\end{document}